\newtheorem{defn}{Definition}
\newtheorem{thm}[defn]{Theorem}
\newtheorem{cor}[defn]{Corollary}
\newtheorem{lem}[defn]{Lemma}
\newtheorem{prop}[defn]{Proposition}
\theoremstyle{remark}
\newtheorem{rem}[defn]{Remark}
\theoremstyle{remark}
\newtheorem{exam}{Example}
\numberwithin{equation}{section} \numberwithin{defn}{section}
\newcommand\aut{\operatorname{Aut}}
\newcommand\ed{\operatorname{End}}
\newcommand\Ker{\operatorname{Ker}}
\renewcommand\dim{\operatorname{dim}}
\newcommand\Det{\operatorname{det}}
\newcommand\tr{\operatorname{tr}}
\newcommand\limpl[1]{\underset{#1}\varprojlim\,}
\def\mod #1/#2{\kern.06em{\raise1.2pt\hbox{$#1$}}/
      {\raise-1.2pt\hbox{$#2$}}}
\newcommand\B{{\mathcal B}}
\renewcommand\tilde{\widetilde}
\renewcommand\lim{\limpl{A\in\B}}
\newcommand\beq{
      \setcounter{equation}{\value{defn}}\addtocounter{defn}1
      \begin{equation}}
\begin{document}

\title{Classification of Finite Potent Endomorphisms}
\author{Fernando Pablos Romo }
\address{Departamento de
Matem\'aticas and Instituto Universitario de F\'{\i}sica Fundamental y Matem\'aticas, Universidad de Salamanca, Plaza de la Merced 1-4,
37008 Salamanca, Espa\~na} \email{fpablos@usal.es}
\keywords{Finite Potent Endomorphism, Invariants,
Trace, Determinant.}
\thanks{2010 Mathematics Subject Classification: 15A03, 15A04.
\\ This work is partially supported by the
Spanish Government research contract no. MTM2012-32342.}

\maketitle

\begin{abstract} The aim of this work is to offer a family of invariants that allows us to classify finite potent endomorphisms on arbitrary vector spaces, generalizing the classification of endomorphisms on finite-dimensional vector spaces. As a particular case we classify nilpotent endomorphisms on infinite-dimensional vector spaces.
\end{abstract}

\bigskip

\setcounter{tocdepth}1

\tableofcontents
\bigskip

\section{Introduction}

The classification of mathematical objects is a classical problem: try to determine the structure of a quotient set up to some equivalence. The classification of endomorphisms on finite-dimensional vector spaces, the group of automorphisms acting by conjugation, from Jordan bases is well-known.

In this work we generalize this classification to finite potent endomorphisms on arbitrary vector spaces (this notion was introduced by J. Tate as a tool for his elegant definition of Abstract Residues -\cite{Ta}-): that is, if $\varphi$ and $\phi$ are finite potent endomorphisms on a $k$-vector space $V$, we give conditions for the existence of an automorphism $\tau \in \aut_k(V)$ such that $\phi = \tau \varphi \tau^{-1}$. As a particular case, we classify nilpotent endomorphisms on arbitrary infinite-dimensional vector spaces. As far as we know, this result is not stated explicitly in the literature.

If $X^{fp}_V$ is the subset of $\ed_k (V)$ consisting of all the finite potent endomorphisms of $V$, we also provide a explicit description of the quotient set $X^{fp}_V \big / \aut_k(V)$ for countable dimensional vector spaces.

The paper is organized as follows. In section \ref{s:pre} we briefly recall the basic definitions of this work: the definition of finite potent endomorphisms with the decomposition of the vector space given by M. Argerami, F. Szechtman and R. Tifenbach in \cite{AST}; Tate's definition of the trace of a finite potent endomorphism -\cite{Ta}-; and the definition of the determinant for these objects recently offered by D. Hern\'andez Serrano and the author in \cite{HP}. Moreover, in this section we describe the well-known theory of the classification of endomorphisms on finite-dimensional vector spaces.

Section \ref{s:main-classif} is devoted to giving the main results of this work. Indeed, we offer invariants to classify nilpotent endomorphism on arbitrary vector spaces (Theorem \ref{th:class-nilpot}). As an example we offer the explicit description of the quotient set obtained from the classification of nilpotent endomorphisms on a countable dimensional vector space (Example \ref{ex:countable}). Using this classification of nilpotent endomorphisms, we offer invariants to solve the proposed problem: the classification of finite potent endomorphisms on arbitrary vector spaces (Theorem \ref{th:class-fp}). Finally, we characterize the quotient set of finite potent endomorphisms for countable dimensional vector spaces (Example \ref{exam:fp}), and we show that the determinant and the trace of a finite potent operator are invariant under the classification offered.

\section{Preliminaries} \label{s:pre}

This section is added for the sake of completeness.

\subsection{Basic Definitions}\label{ss:CT}

 Let $k$ be an arbitrary field, and let $V$ be a $k$-vector space.

    Let us now consider an endomorphism $\varphi$ of $V$. We say
that $\varphi$ is ``finite potent'' if $\varphi^n V$ is finite
dimensional for some $n$. This definition was introduced by J. Tate in \cite{Ta} as a basic tool for his elegant definition
of Abstract Residues.

 In 2007 M. Argerami, F. Szechtman and R. Tifenbach showed in \cite{AST} that an endomorphism $\varphi$ is
finite potent if and only if $V$ admits a $\varphi$-invariant
decomposition $V = U_\varphi \oplus W_\varphi$ such that
$\varphi_{\vert_{U_\varphi}}$ is nilpotent, $W_\varphi$ is finite
dimensional, and $\varphi_{\vert_{W_\varphi}} \colon W_\varphi
\overset \sim \longrightarrow W_\varphi$ is an isomorphism.

Indeed, if $k[x]$ is the algebra of polynomials in the variable x with coefficients in $k$, we may view $V$ as an $k[x]$-module via $\varphi$, and the explicit definition of the above $\varphi$-invariant subspaces of $V$ is:
\begin{itemize}

\item $U_\varphi = \{v \in V \text{ such that } x^m v = 0 \text{ for some m }\}$.

\item $W_\varphi = \{v \in V \text{ such that } p(x) v = 0 \text{ for some } p(x) \in k[x] \text{ relative prime to } x\}$.

\end{itemize}

Note that if the annihilator polynomial of $\varphi$ is $x^m\cdot p(x)$ with $(x,p(x)) = 1$, then $U_\varphi = \Ker \varphi^m$ and $W_\varphi = \Ker p(\varphi)$.

Hence, this decomposition is unique. In this paper we shall call this decomposition the
$\varphi$-invariant AST-decomposition of $V$.

For a finite potent endomorphism $\varphi$, a trace $\tr_V(\varphi) \in k$ may
be defined as $\tr_V(\varphi) = \tr_W(\varphi_{\vert_{W_\varphi}})$

This trace has the following properties:
\begin{enumerate}
\item If $V$ is finite dimensional, then $\tr_V(\varphi)$ is the
ordinary trace. \item If $W$ is a subspace of $V$ such that
$\varphi W \subset W$ then $$\tr_V(\varphi) = \tr_W(\varphi) +
\tr_{V/W}(\varphi)\, .$$ \item If $\varphi$ is nilpotent, then
$\tr_V(\varphi) = 0$.
\end{enumerate}

For details readers are referred to \cite{Ta}.

With the previous notation, and using this AST-decomposition of $V$, recently D. Hern\'andez Serrano and the author have offered in \cite{HP} a definition
of a determinant for finite potent endomorphisms as follows:
$$\Det^k_V(1 +\varphi) := \Det^k_{W_\varphi}(1 + \varphi_{\vert_{W_\varphi}})\,.$$

This definition generalizes the one given by A. Grothendieck for operators of finite rank in \cite{Gr}, and satisfies the following properties:
\begin{itemize}
\item If $V$ is finite dimensional, then $\Det^k_V(1 + \varphi)$
is the ordinary determinant.

\item If $W$ is a subspace of $V$ such that $\varphi W \subset W$,
then: $$\Det^k_V(1 + \varphi) = \Det^k_W(1 + \varphi) \cdot
\Det^k_{V/W}(1 + \varphi)\, .$$

\item If $\varphi$ is nilpotent, then $\Det^k_V(1 + \varphi) = 1$.
\end{itemize}

\subsection{Classification of Endomorphisms on Finite-Dimensional Vector Spa\-ces}\label{ss:clas-finite}
\label{ss:ACK}

Let $E$ be a finite-dimensional vector space over a field $k$, and let $T\in \ed_k(E)$ be an endomorphism of $E$. We have that $T$ induces a structure of $k[x]$-module from the action $$\begin{aligned} k[x] \times E &\longrightarrow E \\ [p(x), e] &\longmapsto p(T)e \, .\end{aligned}$$\noindent We shall write $E_T$ to denote the vector space $E$ with this $k[x]$-module structure.

It is known that two endomorphisms $T, {\tilde T} \in \ed_k(E)$ are equivalent, i. e. there exists an automorphism $\tau \in \aut_k (V)$ such that $T = \tau {\tilde T} \tau^{-1}$ if and only if the $k[x]$-modules $E_T$ and $E_{{\tilde T}}$ are isomorphic.

Let $a_T(x) = p_1(x)^{n_1}\cdot \dots \cdot p_r(x)^{n_r}$ be the annihilator polynomial of $T$, where $p_i(x)$ are irreducible polynomials on $k[x]$. The decomposition of $k$-vector spaces $$E = \Ker p_1(T)^{n_1}\oplus \dots \oplus \Ker p_r(T)^{n_r}\, ,$$\noindent where the subspaces $\Ker p_i(T)^{n_i} \subset E$ are invariant by $T$, is compatible with the respective $k[x]$-module structures.

Indeed, the classification of endomorphisms on finite-dimensional vector spaces is reduced to studying the $k[x]$-module structure of $\Ker p(T)^n$, $p(x)$ being an irreducible polynomial on $k[x]$. This structure is determined by a decomposition of $k[x]$-modules:

$$\Ker p(T)^n \simeq {\big [ k[x] \big / p(x)^n \big ]}^{\nu_n (E,p(T))} \oplus \dots \oplus {\big [ k[x] \big / p(x) \big ]}^{\nu_1 (E,p(T))}\, ,$$\noindent where $\nu_n (E,p(T)) \ne 0$ and $$\nu_i (E,p(T)) = \dim_K \big ( \Ker p(T)^i \big / [\Ker p(T)^{i-1} + p(T) \Ker p(T)^{i+1}] \big )\, ,$$\noindent with $K = k[x] \big / p(x)$.

Again writing the annihilator polynomial of $T$ as $a_T(x) = p_1(x)^{n_1}\cdot \dots \cdot p_r(x)^{n_r}$, the invariant factors $\{\nu_i (E,p_j(T))\}_{1\leq j \leq r\, ; \, 1\leq i \leq n_j}$ determine the $k[x]$-module structure of $E_T$ and, therefore, they classify the endomorphism $T$.

Furthermore, if $K_j = k[x] \big / p_j(x)$, then $p_j(x)$ is a polynomial of degree $d_j = \dim_k K_j$.

The well-known theory of classification of endomorphisms on finite-dimensional vector spaces shows that there exist families of vectors $\{e^{ij}_h\}_{1\leq h \leq \nu_i (E,p_j(T))}$ with
 $$\begin{aligned} e^{ij}_h &\in \Ker p_j(T)^i \\ e^{ij}_h &\notin \Ker p_j(T)^{i-1} + p_j(T) \Ker p_j(T)^{i+1}\, ,\end{aligned}$$\noindent for all $1\leq j \leq r$ and $1\leq i \leq n_j$, such that if we set
 $$<e^{ij}_h>_T = \underset {0\leq s \leq i - 1} \bigoplus  <p_j(T)^{s}[e^{ij}_h], p_j(T)^{s} [T(e^{ij}_h)], \dots , p_j(T)^{s}[T^{d_j -1}(e^{ij}_h)]>\, ,$$
$$E = \underset {\begin{aligned}1 &\leq j \leq r  \\ 1 &\leq i \leq n_j \\ 1 \leq h &\leq \nu_i (E,p_j(T)) \end{aligned}} {\bigoplus} <e^{ij}_h>_T\, .$$

Indeed, the family of vectors \begin{equation} \label{eq:bases-j-finite} \underset {\begin{aligned}1 &\leq j \leq r  \\ 1 &\leq i \leq n_j  \end{aligned}} {\bigcup} \{e^{ij}_h\}_{1\leq h \leq \nu_i (E,p_j(T))}\end{equation} generates a Jordan basis of $E$ for $T$.

\section{Classification of finite potent endomorphisms}\label{s:main-classif}

Let $V$ be an arbitrary $k$-vector space, and let $\ed_k (V)$ be the $k$-vector space of endomorphisms of $V$.
Let us consider the subset $X^{fp}_V \subset \ed_k (V)$ consisting of all the finite potent endomorphisms of $V$ (note that $X^{fp}_V$ is not
a vector subspace of $\ed_k (V)$ because, in general, the sum of two finite potent endomorphisms is not finite potent).

We have an action of the group of automorphisms of $V$, $\aut_k (V)$, on $X^{fp}_V$ by conjugation:
$$\begin{aligned} G \times X^{fp}_V &\longrightarrow X^{fp}_V \\ (\tau, \varphi) &\longmapsto \tau \varphi {\tau}^{-1} \, .\end{aligned}$$

This section is devoted to offering the main result of this work: the characterization of the quotient set ${X^{fp}_V}\big /{\aut_k(V)}$.

Henceforth, if $f\in \ed_k (V)$ and $H$ is a $k$-subspace of $V$ invariant by $f$, to simplify we shall again write $f\colon H \longrightarrow H$ and $f\colon V/H \longrightarrow V/H$ to refer to the induced linear operators.

\subsection{Classification of nilpotent endomorphisms} \label{ss:nilpotent}

Let $V$ again be an arbitrary vector space over a ground field $k$, and let ${\mathcal B} =\{v_i\}_{i\in I}$ be a basis of $V$. It is known that $\dim (V) = \# {\mathcal B}$ is independent of the basis chosen, $\# {\mathcal B}$ being the cardinal of the set ${\mathcal B}$.

Let $X^{N}_V$ be the subset of $X^{fp}_V$ consisting of all nilpotent endomorphisms of $V$, that is:
$$X^{N}_V = \{f \in \ed_k (V) \text{ such that } f^n = 0 \text{ for a certain } n\in {\mathbb N}\, \}\, .$$

It is clear that the group of automorphisms of $V$, $\aut_k(V)$, acts on $X^{N}_V$ by conjugation. We shall characterize the quotient set $X^{N}_V\big /{\aut_k(V)}$.

To start, we shall construct a Jordan Basis of $V$ for a nilpotent endomorphism.

Let us consider $f \in X^{N}_V$ of order $n$ ($f^n = 0$ and $f^{n-1} \ne 0$). We have a sequence of $k$-subspaces of $V$:
$$\{0\} \subsetneqq \Ker f \subsetneqq \Ker f^2 \subsetneqq \cdots \subsetneqq  \Ker f^{n-1} \subsetneqq \Ker f^n = V\, .$$

Let $H_n^f$ be a supplementary subspace of $\Ker f^{n-1}$ on $V$, i. e. $$\Ker f^{n-1} \oplus H_n^f = V\, .$$

Note that the dimension of $H_n^f$, $\dim H_n^f$, is independent of the choice made because $$H_n^f \simeq V/{\Ker f^{n-1}}\, .$$

\begin{lem} \label{lem:barrosa} With the previous notation, we have that:
\begin{enumerate}
\item $f(H_n^f) \cap H_n^f = \{0\} \subset V$.
\item $f(H_n^f) \cap \Ker f^{n-2} = \{0\} \subset \Ker f^{n-1}$.
\end{enumerate}
\end{lem}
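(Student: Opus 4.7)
The plan is to exploit the single identity $f^n=0$ together with the direct sum decomposition $V=\Ker f^{n-1}\oplus H_n^f$. Both statements reduce to pulling back a membership under $f$ and observing that it forces a vector of $H_n^f$ to lie in $\Ker f^{n-1}$.

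For (1), I would first note the auxiliary inclusion $f(H_n^f)\subseteq \Ker f^{n-1}$: if $h\in H_n^f$, then $f^{n-1}(f(h))=f^n(h)=0$ since $f$ is nilpotent of order $n$. Therefore
\[
f(H_n^f)\cap H_n^f\;\subseteq\;\Ker f^{n-1}\cap H_n^f\;=\;\{0\},
\]
the last equality being the defining property of the chosen supplementary subspace $H_n^f$.

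For (2), I would take $v\in f(H_n^f)\cap \Ker f^{n-2}$ and write $v=f(h)$ with $h\in H_n^f$. The condition $v\in\Ker f^{n-2}$ says $f^{n-2}(v)=f^{n-1}(h)=0$, so $h\in \Ker f^{n-1}\cap H_n^f=\{0\}$, and hence $v=f(h)=0$. Note that the inclusion into $\Ker f^{n-1}$ in the statement is automatic from $f(H_n^f)\subseteq\Ker f^{n-1}$, already observed in the proof of (1).

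There is no serious obstacle here; both items are immediate consequences of $f^n=0$ combined with the fact that $H_n^f$ meets $\Ker f^{n-1}$ trivially. The only point worth flagging is the preliminary observation $f(H_n^f)\subseteq\Ker f^{n-1}$, which turns both intersections into intersections with $\Ker f^{n-1}$, where the supplementary property applies directly.
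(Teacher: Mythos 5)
Your proof is correct and follows essentially the same route as the paper: both parts reduce to the observation $f(H_n^f)\subseteq\Ker f^{n-1}$ (which you justify slightly more explicitly via $f^n=0$) combined with $H_n^f\cap\Ker f^{n-1}=\{0\}$. The argument for (2) is word-for-word the paper's.
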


\begin{proof}

\begin{enumerate}
\item Since $f(H_n^f) \subset \Ker f^{n-1}$ and $H_n^f \cap \Ker f^{n-1} = \{0\}$, the first assertion is deduced.

\item Let us consider $v \in f(H_n^f) \cap \Ker f^{n-2}$.

If $v = f(h)$, with $h \in H_n^f$, since $v\in \Ker f^{n-2}$, then $f^{n-1} (h) = 0$.

Thus, $h \in \Ker f^{n-1} \cap H_n^f = \{0\}$, and we conclude that $v = 0$.
\end{enumerate}
\end{proof}

Hence, we can now consider a $k$-subspace of $V$, $H_{n-1}^f$, such that $$\Ker f^{n-1} = \Ker f^{n-2} \oplus f(H_n^f) \oplus H_{n-1}^f\, .$$

Since $V = \Ker f^n$ and $f(\Ker f^{n-1}) \subseteq \Ker f^{n-2}$, it is clear that
$$\Ker f^{n-1} = [\Ker f^{n-2} + f(\Ker f^n)] \oplus H_{n-1}^f\, .$$

Similarly to Lemma \ref{lem:barrosa}, we can prove that:
\begin{lem} If $1\leq i < n$, one has that:
$$f^{j-i} (H_j^f) \cap [f^{j-i+1} (H_{j+1}^f) + \dots + f^{n-i} (H_n^f)+ \Ker f^{i-1}] = \{0\} \in \Ker f^i\, ,$$\noindent for all $i + 1 \leq j < n$.
\end{lem}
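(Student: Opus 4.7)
The plan is to extend the trick used in Lemma~2.1(2) by isolating the appropriate power of $f$ so as to kill the $\Ker f^{i-1}$ summand on the right-hand side. I first note that the subspaces $H_j^f$ for $j\le n-1$ must be built by downward induction: having defined $H_n^f,H_{n-1}^f,\dots,H_{j+1}^f$, one picks $H_j^f$ to be any complement inside $\Ker f^j$ of the subspace
$$\Ker f^{j-1} + f(H_{j+1}^f) + f^2(H_{j+2}^f) + \cdots + f^{n-j}(H_n^f),$$
thereby obtaining the direct-sum decomposition $\Ker f^j=\Ker f^{j-1}\oplus f(H_{j+1}^f)\oplus\cdots\oplus f^{n-j}(H_n^f)\oplus H_j^f$. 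The cases $j=n$ and $j=n-1$ are already handled in the excerpt, and in general this construction is legitimate precisely thanks to the directness asserted by the present lemma.

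To prove the lemma, take $v$ in the stated intersection. I can write
$$v=f^{j-i}(h_j)=f^{j-i+1}(h_{j+1})+f^{j-i+2}(h_{j+2})+\cdots+f^{n-i}(h_n)+w$$
with $h_k\in H_k^f$ for $j\le k\le n$ and $w\in\Ker f^{i-1}$. Applying $f^{i-1}$ to this chain of equalities annihilates $w$ and converts each $f^{k-i}$ into $f^{k-1}$, so factoring $f^{j-1}$ out of the resulting identity yields
$$f^{j-1}\bigl(h_j-f(h_{j+1})-f^2(h_{j+2})-\cdots-f^{n-j}(h_n)\bigr)=0.$$
Hence $h_j$ lies in $\Ker f^{j-1}+f(H_{j+1}^f)+\cdots+f^{n-j}(H_n^f)$. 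But by the construction above, $H_j^f$ is a complement of exactly this subspace inside $\Ker f^j$; since $h_j\in H_j^f$ as well, one concludes $h_j=0$ and therefore $v=0$.

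The calculation itself is routine; the real subtlety is the interplay between the construction of the $H_j^f$ and the lemma, since the directness needed to define $H_j^f$ at each stage is precisely what the lemma certifies. I would therefore present the inductive construction and the lemma together, by downward induction on $j$, treating the base case $j=n$ separately: any $h\in H_n^f$ with $f^{n-1}(h)=0$ belongs to $H_n^f\cap\Ker f^{n-1}=\{0\}$, which gives $f^{n-i}(H_n^f)\cap\Ker f^{i-1}=\{0\}$ and anchors the induction.
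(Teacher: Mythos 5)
Your proof is correct and is exactly the argument the paper intends: the paper omits the proof entirely, saying only that it is obtained ``similarly to'' the preceding lemma, and your computation --- apply $f^{i-1}$ to annihilate the $\Ker f^{i-1}$ term and then use that $H_j^f$ is by construction a complement of $\Ker f^{j-1}+f(H_{j+1}^f)+\cdots+f^{n-j}(H_n^f)$ inside $\Ker f^j$ --- is the natural generalization of that lemma's proof of part (2). Your further observation that the lemma and the downward-recursive choice of the subspaces $H_j^f$ must be established together, with the case $j=n$ (which the stated range $i+1\leq j<n$ omits) serving as the anchor of the induction, is a correct and worthwhile clarification of a point the paper leaves implicit.
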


Then, recurrently, we can fix $k$-vector subspaces of $V$, $\{H_{r}^f\}_{1\leq r \leq n}$, such that:
\begin{equation} \label{eq:adad} \Ker f^i = \Ker f^{i-1} \oplus f^{n-i} (H_n^f) \oplus \dots \oplus f(H_{i+1}^f) \oplus H_i^f\, ,\end{equation}\noindent for every $1\leq i < n$.

As above, bearing in mind that $$\Ker f^{i-1} \oplus f^{n-i} (H_n^f) \oplus \dots \oplus f(H_{i+1}^f) = \Ker f^{i-1} + f(\Ker f^{i+1})\, ,$$\noindent one has that $$\Ker f^i = [\Ker f^{i-1} + f(\Ker f^{i+1})] \oplus H_i^f\, .$$

    If we now set $$\begin{aligned}   W_n^{\varphi} &= {V}\big / {\Ker f^{n-1}} \\ W_{n-1}^{\varphi} &= \Ker f^{n-1}  \big / [\Ker f^{n-2} + f(\Ker f^{n})]
 \\ &\vdots \\ W_{i}^{\varphi} &= \Ker f^i  \big / [\Ker f^{i-1} + f(\Ker f^{i+1})] \\ &\vdots  \\ W_{2}^{\varphi} &= \Ker f^2  \big / [\Ker f + f(\Ker f^{3})] \\ W_{1}^{\varphi} &= \Ker f  \big /  f(\Ker f^2) \, ,\end{aligned}$$\noindent by construction the dimension of the $k$-subspace $H_i^f$ is independent of the choices made for all $1\leq i \leq n$, and $$\dim H_i^f = \dim W_i^{\varphi}\, .$$

Accordingly, we can assign to each $f \in X^{N}_V$ a family of cardinals $\{\mu_i(V,f)\}_{1\leq i \leq n}$ where $\mu_i(V,f) = \dim W_i^{\varphi}$.

\begin{rem} If $f \in X^{N}_V$, with $f^n = 0$ and $f^{n-1} \ne 0$, one has that $\mu_n(V,f) \ne 0$.

We should emphasize that there is no relationship of order between the invariants $\{\mu_i(V,f)\}$.

    Thus, if $V$ is a $k$-vector space of countable dimension with a basis $\{e_1,e_2,\dots,$ $e_n,\dots\}$, and we consider $f,g \in \in X^{N}_V$ defined by:
$$f(v_i) = \left \{ {\begin{aligned} v_3  &\text{ if } i = 1,2 \\ 0  &\text{ if } i \geq 3 \end{aligned}} \right . \, ,$$

$$g(v_j) = \left \{ {\begin{aligned} 0 &\text{ if } j = 1,2 \\ v_{j+1} &\text{ if } j \geq 3 \text{ odd } \\ 0 &\text{ if } j \geq 4 \text{ even } \end{aligned}} \right . \, ,$$\noindent then
$f^2 = g^2 = 0$, and one has that:
\begin{itemize}
\item  $H_2^f = <e_1>$, $f(H_2^f) = <e_3>$, and $H_1^f = <e_1 - e_2, e_4, e_5, ...>$;
\item $\mu_1 (V,f) ) = \aleph_0$, and $\mu_2 (V,f) ) = 1$;
\item $H_2^g = <e_{2j + 1}>_{j\geq 1}$, $f(H_2^g) = <e_{2j}>_{j\geq 2}$, and $H_1^g = <e_1,e_2>$;
\item $\mu_1 (V,2) ) = 2$, and $\mu_2 (V,f) ) = \aleph_0$;
 \end{itemize}
$\aleph_0$ being the cardinal of the set of all natural numbers.

\end{rem}

Henceforth, for indexing bases, $S_{{\mu}_i (V,f)}$ will be a set such that $\# S_{{\mu}_i (V,f)} = {\mu}_i (V,f)$, with $S_{{\mu}_i (V,f)} \cap S_{{\mu}_j (V,f)} = \emptyset$ for $i\ne j$. In particular, if ${\mu}_i (V,f)$ is a natural number $N$, then $S_{{\mu}_i (V,f)} = \{i_1,i_2, \dots,i_N\}$. Note that $$\# \big [ \underset {1\leq i \leq n} \bigcup (S_{{\mu}_i (V,f)} \sqcup \overset {i)} \dots \sqcup S_{{\mu}_i (V,f)}) \big ] = \dim (V)\, .$$

\begin{prop} \label{lem:barrosa} If $1\leq i \leq n$ and $\{v_{s_i}\}_{s_i\in S_{{\mu}_i (V,f)}}$ is a basis of $H_i^f$, then:
\begin{enumerate}
\item $\{f^r(v_{s_i})\}_{s_i\in S_{{\mu}_i (V,f)}}$ is a basis of $f^r(H_i^f)$ for all $1\leq r < i$.
\item $$\underset {\begin{aligned} s_i &\in S_{{\mu}_i (V,f)} \\ 1 &\leq i \leq n \end{aligned}} {\bigcup} \{v_{s_i}, \varphi (v_{s_i}), \dots , \varphi^{i-1} (v_{s_i})\}$$\noindent is a Jordan basis of $V$ for $\varphi$.
\end{enumerate}
\end{prop}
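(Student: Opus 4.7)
The plan is to establish (1) by an injectivity argument and (2) by unrolling the nested decompositions (\ref{eq:adad}) into a single direct sum decomposition of $V$.

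For part (1), I would first observe that the decomposition
\[
\Ker f^i = \Ker f^{i-1} \oplus f^{n-i}(H_n^f) \oplus \cdots \oplus f(H_{i+1}^f) \oplus H_i^f
\]
forces $H_i^f \cap \Ker f^{i-1} = \{0\}$, and in particular $H_i^f \cap \Ker f^{r} = \{0\}$ for every $r \leq i-1$. Consequently, for $1\le r < i$, the restriction $f^r\colon H_i^f \to f^r(H_i^f)$ is an injective $k$-linear surjection, hence an isomorphism. A basis of $H_i^f$ is therefore sent to a basis of $f^r(H_i^f)$, which proves (1).

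For part (2), I would first argue by descending induction on $i$ starting from the identity $V = \Ker f^{n-1} \oplus H_n^f$, substituting (\ref{eq:adad}) at each step, that
\[
V \;=\; \bigoplus_{i=1}^{n} \, \bigoplus_{r=0}^{i-1} f^{r}(H_i^f),
\]
where for $i=1$ the inner term is just $H_1^f$ (using $\Ker f^0 = \{0\}$). Combining this with (1), the union of the bases $\{f^r(v_{s_i})\}$ over all admissible $r$, $i$, $s_i$ is a basis of $V$. Finally, since $v_{s_i} \in H_i^f \subset \Ker f^i$ and (by (1)) $f^{i-1}(v_{s_i}) \neq 0$, each chain
\[
\{v_{s_i},\, f(v_{s_i}),\, \ldots,\, f^{i-1}(v_{s_i})\}
\]
spans an $f$-invariant cyclic subspace on which $f$ acts as a single Jordan block of size $i$, so the displayed union is indeed a Jordan basis of $V$ for $f$.

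The only genuinely delicate point is the iteration that collapses the nested decompositions (\ref{eq:adad}) into a single direct sum for $V$; one must keep track of bookkeeping to ensure that the summands $f^r(H_i^f)$ appearing at different stages of the induction do not overlap. This is handled cleanly by induction on $n-i$, using at each step that the new term $f(H_{i+1}^f)$ introduced when expanding $\Ker f^{i}$ is already a direct summand complementary to $\Ker f^{i-1}$ in the relevant subspace. Everything else — the injectivity in (1) and the Jordan block structure on each cyclic piece — is immediate from the definitions of $H_i^f$ and from $v_{s_i} \in \Ker f^i \setminus \Ker f^{i-1}$.
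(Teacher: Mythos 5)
Your proof is correct and follows essentially the same route as the paper: part (1) is the paper's linear-independence argument recast as injectivity of $f^r$ on $H_i^f$ (both resting on $H_i^f \cap \Ker f^{i-1} = \{0\}$), and part (2) spells out in detail the recursive unrolling of the decompositions (\ref{eq:adad}) that the paper invokes in one line. The extra care you take with the direct-sum bookkeeping in (2) is a welcome expansion of what the paper leaves implicit, but it is not a different method.
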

\begin{proof}
\begin{enumerate}

\item We only have to show that $\{f^r(v_{s_i})\}_{s_i\in S_{{\mu}_i (V,f)}}$ is a system of linearity-independent vectors.

If $J\subset S_{{\mu}_i (V,f)}$, and $\sum_{j\in J} \lambda_j f^r(e_j) = 0$, since $f^r$ is linear one has that $$f^r[\sum_{j\in J} \lambda_j (e_j)] = 0\, .$$

Thus, $\sum_{j\in J} \lambda_j (e_j) \in \Ker f^r \cap H_i^f = \{0\}$ with $r < i$, and we conclude that $\lambda_j = 0$ for every $j\in J$.

\item The existence of bases of $V$ with the required structure is a direct consequence of the decomposition of $V$ obtained recurrently from expressions (\ref{eq:adad}).
\end{enumerate}
\end{proof}

\begin{rem}
Recall from \cite{LB} that for every  $\phi \in \ed (V)$
possessing an annihilating polynomial of an arbitrary
infinite-dimensional vector space $V$ there exists a Jordan basis
of $V$ associated with $\phi$. We should note that the above construction of Jordan bases for nilpotent endomorphisms is compatible with the results of \cite{LB}. However, from the proof of the existence of Jordan bases given in \cite{LB} a Classification Theorem for these endomorphisms is not obtained, because from the statements of this paper it is not possible to deduce that the dimensions of the vector subspaces that determine a Jordan basis are independent of the choices made.
\end{rem}

We shall use the existence of Jordan bases for nilpotent elements to characterize the quotient set $X^{N}_V\big /{\aut_k(V)}$.

Note that a Jordan basis of $V$ for a nilpotent endomorphism of order $n$, $\varphi$, is determined by a family of vectors \begin{equation} \label{eq:uds} \{v_{s_1}\}_{s_1 \in S_{{\mu}_1 (V,f)}}\cup \dots \cup \{v_{s_n}\}_{s_n \in S_{{\mu}_n (V,f)}}\, ,\end{equation} $\{v_{s_i}\}_{s_i\in S_{{\mu}_i (V,f)}}$ being a basis of $H_i^f$.

Let $\tau$ be an automorphism of $V$.

\begin{lem} \label{lem:palace} If $v\in V$, then $$f^s (v) = 0 \Longleftrightarrow {\bar f}^s (\tau (v)) = 0$$\noindent with ${\bar f} = \tau f \tau^{-1}$.
\end{lem}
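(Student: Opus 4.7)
The plan is to exploit the fact that conjugation by $\tau$ commutes with taking powers. First I would observe that since $\bar f = \tau f \tau^{-1}$, a straightforward induction on $s$ (or a single telescoping computation) gives $\bar f^s = \tau f^s \tau^{-1}$, because every intermediate $\tau^{-1}\tau$ cancels.

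Applying this identity to the vector $\tau(v)$ yields
\[
\bar f^s(\tau(v)) \;=\; \tau f^s \tau^{-1}(\tau(v)) \;=\; \tau\bigl(f^s(v)\bigr).
\]
From here the equivalence follows immediately from the injectivity of $\tau$: since $\tau \in \aut_k(V)$, one has $\tau(w) = 0$ if and only if $w = 0$, and applying this to $w = f^s(v)$ gives the stated biconditional.

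I do not anticipate any real obstacle; the lemma is essentially a formal consequence of the definition of $\bar f$ together with the fact that $\tau$ is bijective. The only point worth being careful about is to record the elementary identity $\bar f^s = \tau f^s \tau^{-1}$ explicitly, since it will very likely be reused in the subsequent classification arguments (for instance when transporting Jordan bases of $f$ to Jordan bases of $\bar f$, so as to compare the invariants $\mu_i(V,f)$ and $\mu_i(V,\bar f)$).
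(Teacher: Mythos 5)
Your proposal is correct and follows essentially the same route as the paper: the identity $\bar f^{\,s} = \tau f^s \tau^{-1}$ gives $\bar f^{\,s}(\tau(v)) = \tau(f^s(v))$, and the equivalence then follows from the injectivity of $\tau$. Your version is in fact slightly more careful than the paper's one-line computation, since you record the telescoping identity explicitly.
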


\begin{proof} One has that: $${\bar f}^s (\tau (v)) \Longleftrightarrow \tau [f^s (v)] = 0 \Longleftrightarrow f^s (v) = 0\, .$$
\end{proof}

\begin{cor} \label{cor:palace} If ${\bar f} = \tau f \tau^{-1}$, then
\begin{itemize}
\item $\tau [\Ker f^r] = \Ker {\bar f}^r$ for all $r\geq 1$.
\item $\tau [\Ker f^{i-1} + f\Ker f^{i+1}] = \Ker {\bar f}^{i-1} + {\bar f}\Ker {\bar f}^{i+1}$ for all $i\geq 1$.
\end{itemize}
\end{cor}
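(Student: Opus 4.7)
The plan is to bootstrap both assertions from Lemma \ref{lem:palace} together with the intertwining relation $\tau \circ f = \bar{f} \circ \tau$, which is just a restatement of the hypothesis $\bar{f} = \tau f \tau^{-1}$.

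For the first bullet I would argue set-theoretically. Fix $r \geq 1$ and note that for any $v \in V$, by Lemma \ref{lem:palace} applied with $s = r$ one has $v \in \Ker f^r$ iff $f^r(v) = 0$ iff $\bar{f}^r(\tau(v)) = 0$ iff $\tau(v) \in \Ker \bar{f}^r$. Since $\tau$ is bijective, this directly yields $\tau[\Ker f^r] = \Ker \bar{f}^r$.

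For the second bullet I would use the $k$-linearity of $\tau$ to distribute it over the sum, and then reduce each summand to the first bullet via the intertwining relation. Explicitly,
$$\tau[\Ker f^{i-1} + f\Ker f^{i+1}] = \tau[\Ker f^{i-1}] + \tau[f \Ker f^{i+1}].$$
The first summand becomes $\Ker \bar{f}^{i-1}$ by the first part. For the second, the identity $\tau \circ f = \bar{f} \circ \tau$ gives
$$\tau[f \Ker f^{i+1}] = \bar{f}\bigl(\tau[\Ker f^{i+1}]\bigr) = \bar{f}\, \Ker \bar{f}^{i+1},$$
where the last equality is again the first bullet applied with $r = i+1$.

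I do not foresee any real obstacle: the statement is essentially a formal verification that the filtration by kernels of powers, and the sums appearing in the definition of the invariants $\mu_i(V,f)$, are transported equivariantly by any conjugating automorphism. The only point to keep in mind is that $\tau$ must be applied to sums via its additivity and composed with $f$ via the intertwining relation; both are immediate from $\tau \in \aut_k(V)$ and $\bar{f} = \tau f \tau^{-1}$.
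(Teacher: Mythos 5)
Your proof is correct and follows the route the paper intends: the paper states this corollary without proof as an immediate consequence of Lemma \ref{lem:palace}, and your argument (the set-theoretic equivalence for the kernels, then linearity of $\tau$ plus the intertwining relation $\tau f=\bar f\tau$ for the sum) is exactly the verification being left to the reader.
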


\begin{prop} \label{prop:avila} If ${\bar f} = \tau f \tau^{-1}$, then ${\mu}_i (V,f) = {\mu}_i (V,{\bar f})$ for all $1\leq i \leq n$.
\end{prop}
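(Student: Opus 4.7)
The plan is to show that conjugation by $\tau$ induces a $k$-linear isomorphism between the defining quotient spaces $W_i^{\varphi}$ and $W_i^{\bar\varphi}$ for each $i$, from which equality of their dimensions (and hence of $\mu_i$) follows.

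First I would note that, since $\tau \in \aut_k(V)$, the map $\tau \colon V \to V$ is a $k$-linear bijection. By Corollary \ref{cor:palace}, we have $\tau(\Ker f^i) = \Ker \bar f^i$, so $\tau$ restricts to a $k$-linear isomorphism $\tau_i \colon \Ker f^i \overset{\sim}\longrightarrow \Ker \bar f^i$ for every $i \geq 1$.

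Next I would use the second assertion of Corollary \ref{cor:palace}: $\tau(\Ker f^{i-1} + f\Ker f^{i+1}) = \Ker \bar f^{i-1} + \bar f \Ker \bar f^{i+1}$. Consequently $\tau_i$ sends the subspace $\Ker f^{i-1} + f\Ker f^{i+1} \subseteq \Ker f^i$ bijectively onto $\Ker \bar f^{i-1} + \bar f \Ker \bar f^{i+1} \subseteq \Ker \bar f^i$, so by standard linear algebra $\tau_i$ descends to a well-defined $k$-linear isomorphism
$$\tilde\tau_i \colon W_i^{\varphi} = \frac{\Ker f^i}{\Ker f^{i-1} + f\Ker f^{i+1}} \overset{\sim}\longrightarrow \frac{\Ker \bar f^i}{\Ker \bar f^{i-1} + \bar f\Ker \bar f^{i+1}} = W_i^{\bar\varphi}\, .$$

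Finally, since isomorphic $k$-vector spaces have the same dimension (as cardinals), we conclude
$$\mu_i(V,f) = \dim_k W_i^{\varphi} = \dim_k W_i^{\bar\varphi} = \mu_i(V,\bar f)$$
for every $1 \leq i \leq n$. The argument is essentially a formal consequence of Corollary \ref{cor:palace}; the only real work has already been carried out there, so I do not anticipate any obstacle. The only minor point to keep in mind is that everything is phrased for cardinal-valued dimensions of possibly infinite-dimensional spaces, but since $\tilde\tau_i$ is a genuine bijection of bases this causes no difficulty.
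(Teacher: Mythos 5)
Your argument is correct and is essentially identical to the paper's own proof: both use Corollary \ref{cor:palace} to see that $\tau$ carries $\Ker f^i$ onto $\Ker \bar f^i$ and $\Ker f^{i-1}+f\Ker f^{i+1}$ onto $\Ker \bar f^{i-1}+\bar f\Ker \bar f^{i+1}$, hence induces an isomorphism of the quotient spaces $W_i^{\varphi}\simeq W_i^{\bar\varphi}$, giving equality of the cardinal dimensions $\mu_i$. No further comment is needed.
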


\begin{proof} Bearing in mind Corollary \ref{cor:palace} one has that

\xymatrix{
0 \ar[r] & \Ker f^{i-1} + f(\Ker f^{i+1}) \ar[r] \ar[d]^{\sim}_{\tau} &  \Ker f^i \ar[d]_{\sim}^{\tau}  \\
0 \ar[r] & \Ker {\bar f}^{i-1} + f(\Ker {\bar f}^{i+1}) \ar[r] &  \Ker {\bar f}^i
}

and, hence, for all $1\leq i \leq n$, $\tau$ induces an isomorphism of $k$-vector spaces

$$\Ker f^i  \big / [\Ker f^{i-1} + f(\Ker f^{i+1})] \simeq \Ker {\bar f}^i  \big / [\Ker {\bar f}^{i-1} + f(\Ker {\bar f}^{i+1})]\, ,$$

from where the statement can be deduced.
\end{proof}

\begin{rem}  Let $f$ be a nilpotent endomorphism of order $n$, and let $\{v_{s_1}\}_{s_1 \in S_{{\mu}_1 (V,f)}}\cup \dots \cup \{v_{s_n}\}_{s_n \in S_{{\mu}_n (V,f)}}$ be a family of vector spaces determining a Jordan basis of $V$ for $f$.

If ${\bar f} = \tau f \tau^{-1}$, one has that $${\bar f}^s [\tau (v_{s_i})] = \tau [f^s (v_{s_i})]$$\noindent for all $1\leq i \leq n$ and $s\geq 1$, and we have that
 $$\{\tau (v_{s_1})\}_{s_1 \in S_{{\mu}_1 (V,f)}}\cup \dots \cup \{\tau (v_{s_n})\}_{s_n \in S_{{\mu}_n (V,f)}}$$\noindent determines a Jordan basis of $V$ for ${\bar f}$.
\end{rem}

\begin{thm}[Classification Theorem] \label{th:class-nilpot} Let $f, g \in X^{N}_V$ be two nilpotent endomorphisms of order $n$. Thus, $f \sim g$ (mod. $\aut_k(V)$) if and only if ${\mu}_i (V,f) = {\mu}_i (V,g)$ for all $1\leq i \leq n$.
\end{thm}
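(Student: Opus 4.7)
The forward implication is essentially free at this stage: if $g = \tau f \tau^{-1}$ for some $\tau \in \aut_k(V)$, then Proposition \ref{prop:avila} gives ${\mu}_i(V,f) = {\mu}_i(V,g)$ for every $1\leq i \leq n$. So the whole content lies in the converse.

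For the converse, the plan is to build the conjugating automorphism directly from Jordan bases. Applying Proposition \ref{lem:barrosa}, I fix bases $\{v_{s_i}\}_{s_i\in S_{{\mu}_i(V,f)}}$ of $H_i^f$ and $\{w_{t_i}\}_{t_i\in S_{{\mu}_i(V,g)}}$ of $H_i^g$, obtaining Jordan bases
$$\B_f = \bigcup_{1\leq i\leq n}\bigcup_{s_i\in S_{{\mu}_i(V,f)}}\{v_{s_i},f(v_{s_i}),\dots,f^{i-1}(v_{s_i})\}$$
of $V$ for $f$, and analogously $\B_g$ for $g$. The hypothesis ${\mu}_i(V,f)={\mu}_i(V,g)$ means that for each $i$ the index sets $S_{{\mu}_i(V,f)}$ and $S_{{\mu}_i(V,g)}$ have the same cardinal, so I can choose bijections $\sigma_i\colon S_{{\mu}_i(V,f)}\iso S_{{\mu}_i(V,g)}$ for $1\leq i\leq n$.

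I then define $\tau\in \ed_k(V)$ on the basis $\B_f$ by the rule
$$\tau\bigl(f^r(v_{s_i})\bigr)\;=\;g^r\bigl(w_{\sigma_i(s_i)}\bigr),\qquad 0\leq r\leq i-1,\; s_i\in S_{{\mu}_i(V,f)},\; 1\leq i\leq n,$$
and extend linearly. Since this sends the basis $\B_f$ bijectively onto the basis $\B_g$, the map $\tau$ is an automorphism of $V$. Checking $g\tau = \tau f$ on $\B_f$ is a two-case verification: for $0\leq r < i-1$ both sides produce $g^{r+1}(w_{\sigma_i(s_i)})$, and for $r=i-1$ both sides vanish, the right because $f^i(v_{s_i})=0$ (as $v_{s_i}\in H_i^f\subset \Ker f^i$) and the left because $w_{\sigma_i(s_i)}\in H_i^g\subset \Ker g^i$. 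Hence $g=\tau f\tau^{-1}$, so $f\sim g$ modulo $\aut_k(V)$.

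There is no real obstacle to any single step; what makes the argument work cleanly is that the preceding material has already done the two genuinely non-trivial things, namely (i) the construction of Jordan bases for nilpotent $f$ and (ii) the fact that the cardinals $\mu_i(V,f)$ are independent of all choices. The only point needing a brief line of care is the well-definedness of $\tau$ at the end of each Jordan chain, which reduces to the containment $H_i^g\subset \Ker g^i$ implied by the decomposition in expression (\ref{eq:adad}). Uncountable index sets require no extra argument since $\tau$ is defined basis-wise.
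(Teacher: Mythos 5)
Your proposal is correct and takes essentially the same route as the paper: the forward implication is delegated to Proposition \ref{prop:avila}, and the converse is proved by sending one Jordan basis to the other via $\tau(f^r(v_{s_i})) = g^r(w_{s_i})$ and verifying $g\tau = \tau f$ on that basis. The only difference is cosmetic — you make explicit the bijections $\sigma_i$ between index sets and the endpoint case $r=i-1$ of the chain, both of which the paper leaves implicit.
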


\begin{proof}

$\Longrightarrow )$ If $[f] = [g] \in X^{N}_V\big /{\aut_k(V)}$, there exists $\tau \in \aut_k (V)$ such that $g = \tau f \tau^{-1}$, and it follows from Proposition \ref{prop:avila} that ${\mu}_i (V,f) = {\mu}_i (V,g)$ for all $1\leq i \leq n$.

$\Longleftarrow )$ If ${\mu}_i (V,f) = {\mu}_i (V,g)$ for all $1\leq i \leq n$, let us consider two families of vectors $\{v_{s_1}\}_{s_1 \in S_{{\mu}_1 (V,f)}}\cup \dots \cup \{v_{s_n}\}_{s_n \in S_{{\mu}_n (V,f)}}$ and $\{w_{s_1}\}_{s_1 \in S_{{\mu}_1 (V,g)}}\cup \dots \cup \{w_{s_n}\}_{s_n \in S_{{\mu}_n (V,g)}}$ determining Jordan bases of $V$ for $f$ and $g$ respectively.

Let $\tau \in \aut_k(V)$ be the automorphism defined by $$\tau [f^r (v_{s_i})] = g^r (w_{s_i})\, ,$$\noindent for all $s_i \in S_{{\mu}_i (V,f)}$, $1\leq i \leq n$ and $0\leq r < i$.

By construction, one has that $$(\tau f \tau^{-1})[g^r (w_{s_i})] = (\tau f) [f^r (v_{s_i})] = \tau [f^{r+1} (v_{s_i})] = g^{r+1} (w_{s_i})\, .$$

Accordingly, $\tau f \tau^{-1} = g$ and $[f] = [g] \in X^{N}_V\big /{\aut_k(V)}$.
\end{proof}

\begin{exam} \label{ex:countable} Let $V$ be a $k$-vector space of countable dimension. If $Y = \{0\} \cup {\mathbb N} \cup \{\aleph_0\}$, $\aleph_0$ being the cardinal of the set of all natural numbers, one has that
$$X^{N}_V\big /{\aut_k(V)} = \underset {n\in {\mathbb N}} \bigcup [ {\prod_n}' Y]\, ,$$\noindent where $${\prod_n}' Y = \{(y_1, \dots , y_n) \text{ with } y_i\in Y\, , \, y_n \ne 0 \, , \text{ and } y_j = \aleph_0 \text{ for at least one j}\}\, .$$
\end{exam}

\begin{rem} If $E$ is a finite-dimensional $k$ vector space, and $f\in \ed_k (E)$ is nilpotent of order $n$ with invariants $\{{\mu}_i (V,f)\}_{1\leq i \leq n}$, we should note that ${\mu}_i (V,f) = {\nu}_i (V,f)$ for all $i$ (see Subsection \ref{ss:clas-finite}), and, hence, the structure of $E$ as a $k[x]$-module induced by $f$ is: $$E_f \simeq {\big ( k[x]/x \big )}^{{\mu}_1 (V,f)} \oplus \dots \oplus {\big ( k[x]/x^i \big )}^{{\mu}_i (V,f)}\oplus \dots \oplus {\big ( k[x]/x^n \big )}^{{\mu}_n (V,f)} \ \, .$$
\end{rem}

\subsection{Invariants for finite potent endomorphisms}\label{s:finite-potent}

Let $V$ be an arbitrary $k$-vector space, and let $\varphi$ be a finite potent endomorphism of $V$. Let us consider the AST-decomposition of $V$ induced by $\varphi$, that is: $V = U_\varphi \oplus W_\varphi$ such that
$\varphi_{\vert_{U_\varphi}}$ is nilpotent, $W_\varphi$ is finite
dimensional, and $\varphi_{\vert_{W_\varphi}} \colon W_\varphi
\overset \sim \longrightarrow W_\varphi$ is an isomorphism.

Let $\tau \in \aut_k (V)$ again be an automorphism of $V$.

\begin{lem} If ${\bar \varphi} = \tau \varphi \tau^{-1}$, and $V = U_{\bar \varphi} \oplus W_{\bar \varphi}$ is the AST-decomposition of $V$ induced by $\bar \varphi$, then $\tau [U_\varphi] = U_{\bar \varphi}$ and $\tau [W_\varphi] = W_{\bar \varphi}$.
\end{lem}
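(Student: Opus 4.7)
The plan is to exploit the intrinsic characterization of the AST-subspaces recalled in Subsection \ref{ss:CT}, namely
\[
U_\varphi = \{v\in V : \varphi^m(v)=0 \text{ for some } m\}, \qquad W_\varphi = \{v\in V : p(\varphi)(v)=0 \text{ for some } p(x)\in k[x] \text{ with } (x,p(x))=1\},
\]
together with the elementary observation that conjugation by an automorphism commutes with polynomial evaluation. Since $\bar\varphi = \tau\varphi\tau^{-1}$, a straightforward induction on degree yields $q(\bar\varphi) = \tau\, q(\varphi)\,\tau^{-1}$ for every $q(x)\in k[x]$, so that $q(\bar\varphi)(\tau(v)) = \tau(q(\varphi)(v))$ for all $v\in V$.

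With this identity in hand, the two set-equalities are immediate. For the first, a vector $v$ lies in $U_\varphi$ precisely when $\varphi^m(v)=0$ for some $m$; applying $\tau$ and using the identity above, this is equivalent to $\bar\varphi^m(\tau(v))=0$, i.e.\ $\tau(v)\in U_{\bar\varphi}$. Since $\tau$ is a bijection, this gives $\tau[U_\varphi]=U_{\bar\varphi}$ (and is essentially a reformulation of Lemma \ref{lem:palace}). For the second, the same argument works verbatim using an arbitrary polynomial $p(x)$ coprime to $x$: $p(\varphi)(v)=0$ if and only if $p(\bar\varphi)(\tau(v))=0$, which gives $\tau[W_\varphi]=W_{\bar\varphi}$.

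There is no genuine obstacle in this argument; the only point worth making explicit is that polynomial evaluation is compatible with conjugation, which is the observation that turns Lemma \ref{lem:palace} (treating only the powers $x^s$) into the full statement for arbitrary annihilating polynomials. Alternatively, one could invoke the uniqueness of the AST-decomposition: the splitting $V = \tau[U_\varphi]\oplus\tau[W_\varphi]$ is $\bar\varphi$-invariant, $\bar\varphi$ is nilpotent on $\tau[U_\varphi]$ and an isomorphism on the finite-dimensional subspace $\tau[W_\varphi]$, hence by the uniqueness recalled in Subsection \ref{ss:CT} it must coincide with $U_{\bar\varphi}\oplus W_{\bar\varphi}$.
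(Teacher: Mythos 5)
Your argument is correct and is essentially the paper's own proof: both use the intrinsic description of $U_\varphi$ and $W_\varphi$ as the vectors annihilated by some power of $x$ (resp.\ by some polynomial coprime to $x$) together with the identity $p(\bar\varphi)(\tau v)=0 \Longleftrightarrow p(\varphi)(v)=0$. The alternative via uniqueness of the AST-decomposition is a nice additional remark, but the main line of reasoning matches the paper exactly.
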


\begin{proof} Since, $$U_\varphi = \{v \in V \text{ such that } x^m v = 0 \text{ for some m }\}$$\noindent and
$$W_\varphi = \{v \in V \text{ such that } f(x) v = 0 \text{ for some } f(x) \in k[x] \text{ relative prime to } x\}\, ,$$\noindent bearing in mind that $$p({\bar \varphi}) (\tau v) = 0 \Longleftrightarrow   [\tau p(\varphi) \tau^{-1}] (\tau v) = 0 \Longleftrightarrow \tau [p(\varphi) ( v)] = 0 \Longleftrightarrow  [p(\varphi) ( v)] = 0 $$\noindent for all $v\in V$, the claim is deduced.
\end{proof}

\begin{cor} If ${\bar \varphi} = \tau \varphi \tau^{-1}$, then:
\begin{itemize}
\item $\tau [\varphi_{\vert_{U_\varphi}}] \tau^{-1} = {\bar \varphi}_{\vert_{U_{\bar \varphi}}}$.
\item $\tau [\varphi_{\vert_{W_\varphi}}] \tau^{-1} = {\bar \varphi}_{\vert_{W_{\bar \varphi}}}$.
\end{itemize}
\end{cor}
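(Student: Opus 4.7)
The statement is essentially a formal consequence of the lemma that immediately precedes it: once we know that $\tau$ carries $U_\varphi$ isomorphically onto $U_{\bar\varphi}$ and $W_\varphi$ isomorphically onto $W_{\bar\varphi}$, the two required identities simply express the fact that conjugation by $\tau$ commutes with restriction to an invariant summand. My plan is therefore to unpack the definitions carefully rather than to supply any new structural ingredient; there is no need to revisit the characterization of $U_\varphi$ and $W_\varphi$ in terms of annihilators, since that was the content of the preceding lemma.

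Concretely, I would fix $u \in U_{\bar\varphi}$ and use the previous lemma to write $\tau^{-1}(u)\in U_\varphi$. Then $\varphi_{\vert_{U_\varphi}}(\tau^{-1}(u))$ is defined and equals $\varphi(\tau^{-1}(u))\in U_\varphi$, so applying $\tau$ yields
$$\tau\,\varphi_{\vert_{U_\varphi}}\,\tau^{-1}(u)=\tau\varphi\tau^{-1}(u)=\bar\varphi(u)=\bar\varphi_{\vert_{U_{\bar\varphi}}}(u).$$
Since this holds for every $u\in U_{\bar\varphi}$, the first identity follows, and the argument for the second is verbatim the same with $W$ in place of $U$.

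The only (very minor) point requiring care is the interpretation of $\tau^{-1}$ on the left-hand side of each identity: it must be read as the inverse of the restricted isomorphism $\tau\colon U_\varphi\overset{\sim}{\to} U_{\bar\varphi}$ (respectively on $W$), and not as the global inverse $\tau^{-1}\in\aut_k(V)$ followed by inclusion. The preceding lemma guarantees that these two maps coincide on $U_{\bar\varphi}$ (respectively on $W_{\bar\varphi}$), so no genuine obstacle arises, and the corollary is really just an observation recording that the AST-decomposition is $\aut_k(V)$-equivariant.
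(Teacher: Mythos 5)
Your proposal is correct and follows exactly the route the paper intends: the corollary is stated as an immediate consequence of the preceding lemma (the paper supplies no separate proof), and your computation $\tau\,\varphi_{\vert_{U_\varphi}}\,\tau^{-1}(u)=\bar\varphi_{\vert_{U_{\bar\varphi}}}(u)$ for $u\in U_{\bar\varphi}$, together with the remark that $\tau^{-1}$ must be read as the inverse of the restricted isomorphism, is precisely the unpacking the author leaves to the reader.
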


With similar arguments to Lemma \ref{lem:palace} and Corollary  \ref{cor:palace} one has that
\begin{lem} \label{lem:tata} If ${\bar \varphi} = \tau \varphi \tau^{-1}$, then
\begin{itemize}
\item $\tau [\Ker \, (\varphi_{\vert_{U_\varphi}})^r] = \Ker \,  ({\bar \varphi}_{\vert_{U_{\bar\varphi}}})^r \subset U_{\bar \varphi}$ for all $r\geq 1$.
\item $\tau [({\varphi}_{\vert_{U_{\varphi}}}) \Ker  \, (\varphi_{\vert_{U_\varphi}})^{i}] = ({\bar \varphi}_{\vert_{U_{\bar\varphi}}}) \Ker  \, ({\bar \varphi}_{\vert_{U_{\bar\varphi}}})^{i} \subset U_{\bar \varphi}$ for all $i\geq 1$.
\end{itemize}
\end{lem}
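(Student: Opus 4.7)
The plan is to reduce Lemma~\ref{lem:tata} to the already-proved Lemma~\ref{lem:palace} and Corollary~\ref{cor:palace}, applied to the nilpotent part of the AST-decomposition. By the immediately preceding lemma, $\tau[U_\varphi] = U_{\bar\varphi}$, so $\tau$ restricts to a $k$-linear isomorphism $\tau_U\colon U_\varphi \overset{\sim}{\longrightarrow} U_{\bar\varphi}$; and by the preceding corollary this restriction intertwines the two nilpotent operators, i.e.
$$\tau_U \circ \varphi_{\vert_{U_\varphi}} \;=\; \bar\varphi_{\vert_{U_{\bar\varphi}}} \circ \tau_U\, .$$
Iterating gives $\tau_U \circ (\varphi_{\vert_{U_\varphi}})^r = (\bar\varphi_{\vert_{U_{\bar\varphi}}})^r \circ \tau_U$ for every $r\geq 1$, so the content of the present lemma is just to transport the nilpotent-case results from $V$ to $U_\varphi$ through $\tau_U$.

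For the first bullet, the intertwining identity immediately yields that for $v\in U_\varphi$,
$$(\varphi_{\vert_{U_\varphi}})^r(v)=0 \;\Longleftrightarrow\; (\bar\varphi_{\vert_{U_{\bar\varphi}}})^r(\tau v)=0\, ,$$
which is exactly the analogue of Lemma~\ref{lem:palace} inside $U_\varphi$; consequently $\tau[\Ker(\varphi_{\vert_{U_\varphi}})^r] = \Ker(\bar\varphi_{\vert_{U_{\bar\varphi}}})^r$, proving the first claim and recording in passing that both sides sit inside $U_{\bar\varphi}$ as asserted.

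For the second bullet, I would combine the first bullet with the intertwining relation to compute
$$\tau\bigl[\varphi_{\vert_{U_\varphi}}\Ker(\varphi_{\vert_{U_\varphi}})^i\bigr] \;=\; \bar\varphi_{\vert_{U_{\bar\varphi}}}\bigl(\tau[\Ker(\varphi_{\vert_{U_\varphi}})^i]\bigr) \;=\; \bar\varphi_{\vert_{U_{\bar\varphi}}}\Ker(\bar\varphi_{\vert_{U_{\bar\varphi}}})^i\, ,$$
which is the stated identity. There is no real obstacle here: the whole argument is a verification that the conjugation relation $\bar\varphi = \tau\varphi\tau^{-1}$ is inherited by the AST nilpotent summand, after which the previously established results for a single nilpotent endomorphism apply verbatim. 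If anything deserves mention, it is simply the bookkeeping that every object appearing in the two bullets actually lives in $U_\varphi$ or $U_{\bar\varphi}$, but this is guaranteed by the invariance of the AST summands under $\varphi$ and $\bar\varphi$, respectively.
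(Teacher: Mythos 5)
Your proposal is correct and follows essentially the same route as the paper, which gives no explicit proof but simply asserts the lemma ``with similar arguments to Lemma~\ref{lem:palace} and Corollary~\ref{cor:palace}''; you have merely spelled out those similar arguments, namely restricting $\tau$ to the AST nilpotent summand via $\tau[U_\varphi]=U_{\bar\varphi}$ and the intertwining relation $\tau\,\varphi_{\vert_{U_\varphi}}\,\tau^{-1}={\bar\varphi}_{\vert_{U_{\bar\varphi}}}$. No gaps.
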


And, analogously, one can see that

\begin{lem} \label{lem:nano} If ${\bar \varphi} = \tau \varphi \tau^{-1}$, and $p(x)\in k[x]$, then
\begin{itemize}
\item $\tau [\Ker \,  p(\varphi_{\vert_{W_\varphi}})^r] = \Ker  \, p({\bar \varphi}_{\vert_{W_{\bar\varphi}}})^r \subset W_{\bar \varphi}$ for all $r\geq 1$.
\item $\tau [(\varphi_{\vert_{W_\varphi}}) \Ker  \, p(\varphi_{\vert_{W_\varphi}})^{i}] =  p({\bar \varphi}_{\vert_{W_{\bar\varphi}}})\Ker  \, ({\bar \varphi}_{\vert_{W_{\bar\varphi}}})^{i} \subset W_{\bar \varphi}$ for all $i\geq 1$.
\end{itemize}
\end{lem}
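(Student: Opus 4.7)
The plan is to reduce both assertions to the observation that conjugation by $\tau$ is a $k$-algebra automorphism of $\ed_k(V)$ sending $\varphi$ to $\bar\varphi$, which (by the lemma and corollary that immediately precede this one) restricts to a $k$-linear isomorphism $\tau_{|W_\varphi}\colon W_\varphi \overset{\sim}{\longrightarrow} W_{\bar\varphi}$ intertwining $\varphi_{|W_\varphi}$ with ${\bar\varphi}_{|W_{\bar\varphi}}$.

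First I would apply the homomorphism property to get $p(\bar\varphi) = \tau\, p(\varphi)\,\tau^{-1}$ and hence $p(\bar\varphi)^r = \tau\, p(\varphi)^r\,\tau^{-1}$ for every polynomial $p(x)\in k[x]$ and every $r\geq 1$. Because $W_\varphi$ is $\varphi$-invariant (so $p(\varphi)^r$ preserves $W_\varphi$) and $\tau[W_\varphi]=W_{\bar\varphi}$, this identity restricts to the intertwining relation
\[
p({\bar\varphi}_{|W_{\bar\varphi}})^r \circ \tau_{|W_\varphi} \;=\; \tau_{|W_\varphi}\circ p(\varphi_{|W_\varphi})^r.
\]
For the first bullet, since $\tau_{|W_\varphi}$ is a $k$-linear isomorphism, a vector $v\in W_\varphi$ lies in $\Ker p(\varphi_{|W_\varphi})^r$ if and only if $\tau(v)\in W_{\bar\varphi}$ lies in $\Ker p({\bar\varphi}_{|W_{\bar\varphi}})^r$, whence $\tau[\Ker p(\varphi_{|W_\varphi})^r] = \Ker p({\bar\varphi}_{|W_{\bar\varphi}})^r$, as asserted.

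For the second bullet (which I read, in strict parallel with Lemma~\ref{lem:tata}, as the equality $\tau[(\varphi_{|W_\varphi})\Ker p(\varphi_{|W_\varphi})^i] = ({\bar\varphi}_{|W_{\bar\varphi}})\Ker p({\bar\varphi}_{|W_{\bar\varphi}})^i$), I would write any vector of the left-hand side as $\varphi_{|W_\varphi}(v)$ for some $v\in \Ker p(\varphi_{|W_\varphi})^i$; applying the intertwining $\tau\circ \varphi_{|W_\varphi} = {\bar\varphi}_{|W_{\bar\varphi}}\circ \tau_{|W_\varphi}$ together with the first bullet gives $\tau(\varphi_{|W_\varphi}(v)) = {\bar\varphi}_{|W_{\bar\varphi}}(\tau v)$ with $\tau v\in \Ker p({\bar\varphi}_{|W_{\bar\varphi}})^i$, so the left side is contained in the right. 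The reverse inclusion is obtained by running the same argument with $\tau^{-1}$ and with the roles of $\varphi$ and $\bar\varphi$ swapped.

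I do not expect any genuine obstacle: the argument is bookkeeping about how conjugation moves polynomials in $\varphi$ and their kernels through $\tau$. The one point that requires momentary attention is the restriction step, where one must invoke $\tau[W_\varphi]=W_{\bar\varphi}$ and the $\varphi$-invariance of $W_\varphi$ to justify reading an equality of operators on $V$ as an equality on the summand $W_{\bar\varphi}$; both of these facts are supplied by the lemma and corollary stated just before the present one.
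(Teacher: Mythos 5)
Your argument is correct and is essentially the proof the paper intends: the paper omits an explicit proof, declaring the lemma to follow ``analogously'' to Lemma \ref{lem:palace} and Corollary \ref{cor:palace}, and the analogous argument is exactly your observation that $p(\bar\varphi)^r=\tau\,p(\varphi)^r\,\tau^{-1}$ together with $\tau[W_\varphi]=W_{\bar\varphi}$ and the injectivity of $\tau$. Your reading of the second bullet (whose right-hand side as printed is evidently a typo) as the statement actually used in Proposition \ref{prop:avila-2} is also the right one.
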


If $\varphi_{\vert_{U_\varphi}}$ is a nilpotent endomorphism of order n of $V_{\varphi}$, and $$a_{\varphi_{\vert_{W_\varphi}}}(x) = p_1(x)^{n_1}\cdot \dots \cdot p_r(x)^{n_r}$$\noindent is the annihilator polynomial of $\varphi_{\vert_{W_\varphi}}$, let us consider the invariant factors that classify $\varphi_{\vert_{U_\varphi}} \in X^N_{U_\varphi}$ and $\varphi_{\vert_{W_\varphi}} \in \ed_k (W_\varphi)$:
\begin{itemize}
\item $\{{\mu}_i (U_\varphi,\varphi_{\vert_{U_\varphi}})\}_{1\leq i \leq n}$ (see Subsection \ref{ss:nilpotent});
\item $\{\nu_s (W_\varphi,p_j(\varphi_{\vert_{W_\varphi}}))\}_{1\leq j \leq r\, ; 1 \leq s \leq n_j}$ (see Subsection \ref{ss:clas-finite}).
\end{itemize}

It is clear that
\begin{lem} Keeping the previous notation, if $\varphi_{\vert_{U_\varphi}}$ is a nilpotent endomorphism of order n of $U_{\varphi}$, and $a_{\varphi_{\vert_{W_\varphi}}}(x)$ is the annihilator polynomial of $\varphi_{\vert_{W_\varphi}}$, then
\begin{enumerate}
\item ${\bar \varphi}_{\vert_{U_{\bar \varphi}}}$ is also a nilpotent endomorphism of order n of $U_{\bar \varphi}$\, ;
\item $a_{\varphi_{\vert_{W_\varphi}}}(x) = a_{{\bar \varphi}_{\vert_{W_{\bar \varphi}}}}(x)$.
\end{enumerate}
\end{lem}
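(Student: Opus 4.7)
The plan is to exploit the corollary preceding the statement, which says that conjugation by $\tau$ restricts to an intertwining on the AST pieces, namely $\tau \varphi_{\vert_{U_\varphi}} \tau^{-1} = \bar\varphi_{\vert_{U_{\bar\varphi}}}$ and $\tau \varphi_{\vert_{W_\varphi}} \tau^{-1} = \bar\varphi_{\vert_{W_{\bar\varphi}}}$, where moreover $\tau$ restricts to isomorphisms $U_\varphi \xrightarrow{\sim} U_{\bar\varphi}$ and $W_\varphi \xrightarrow{\sim} W_{\bar\varphi}$. Both assertions are then reduced to the elementary observation that a polynomial evaluation commutes with conjugation by an isomorphism.

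For part (1), I would proceed by induction (or simply by telescoping) to obtain
\[
\bigl(\bar\varphi_{\vert_{U_{\bar\varphi}}}\bigr)^{m} \;=\; \tau\,\bigl(\varphi_{\vert_{U_\varphi}}\bigr)^{m}\,\tau^{-1}
\]
for every $m \geq 1$. Since $\tau\vert_{U_\varphi}$ is a $k$-linear isomorphism onto $U_{\bar\varphi}$, the right-hand side vanishes if and only if $(\varphi_{\vert_{U_\varphi}})^{m}=0$. Therefore the smallest $m$ for which the left-hand side vanishes equals the smallest $m$ for which the right-hand side vanishes, which is $n$ by hypothesis. Hence $\bar\varphi_{\vert_{U_{\bar\varphi}}}$ is nilpotent of order exactly $n$.

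For part (2), the same mechanism applies to arbitrary polynomials rather than powers of $x$: for any $q(x)\in k[x]$ one has
\[
q\bigl(\bar\varphi_{\vert_{W_{\bar\varphi}}}\bigr) \;=\; \tau\,q\bigl(\varphi_{\vert_{W_\varphi}}\bigr)\,\tau^{-1},
\]
(by linearity and the multiplicativity of conjugation on powers). Since $\tau\vert_{W_\varphi}$ is a $k$-linear isomorphism onto $W_{\bar\varphi}$, the operator on the left is zero precisely when the operator on the right is zero. Consequently the ideals $\{q\in k[x]\,:\,q(\varphi_{\vert_{W_\varphi}})=0\}$ and $\{q\in k[x]\,:\,q(\bar\varphi_{\vert_{W_{\bar\varphi}}})=0\}$ coincide, and their (monic) generators agree: $a_{\varphi_{\vert_{W_\varphi}}}(x) = a_{\bar\varphi_{\vert_{W_{\bar\varphi}}}}(x)$.

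There is no real obstacle here; the two claims are formal consequences of the fact that conjugation by an isomorphism is an algebra automorphism on the relevant endomorphism rings and therefore preserves every polynomial identity. The only subtlety worth underscoring is that one must first restrict to the AST pieces (which the preceding corollary permits) before reading the nilpotency order of $\bar\varphi$ on $U_{\bar\varphi}$ and the annihilator of $\bar\varphi$ on $W_{\bar\varphi}$; otherwise these invariants would not be defined in the required restricted sense.
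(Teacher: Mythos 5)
Your proof is correct and follows exactly the route the paper intends: the lemma is stated with ``It is clear that'' and no written proof, but the preceding corollary ($\tau[\varphi_{\vert_{U_\varphi}}]\tau^{-1}={\bar\varphi}_{\vert_{U_{\bar\varphi}}}$ and $\tau[\varphi_{\vert_{W_\varphi}}]\tau^{-1}={\bar\varphi}_{\vert_{W_{\bar\varphi}}}$) is precisely the tool you use, and your observation that conjugation by an isomorphism preserves powers and, more generally, polynomial identities is the intended formal argument. Nothing is missing.
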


\begin{prop} \label{prop:avila-2} If $\varphi$ is a finite potent endomorphism of $V$, and ${\bar \varphi} = \tau \varphi \tau^{-1}$, one has that:
\begin{itemize}
\item $\{{\mu}_i (U_\varphi,\varphi_{\vert_{U_\varphi}})\} = \{{\mu}_i (U_{\bar \varphi},{\bar \varphi}_{\vert_{U_{\bar \varphi}}})\}$ for all ${1\leq i \leq n}$;
\item $\{\nu_s (W_\varphi,p_j(\varphi_{\vert_{W_\varphi}}))\} = \{\nu_s (W_{\bar \varphi},p_j({\bar \varphi}_{\vert_{W_{\bar \varphi}}}))\}$ for all $1\leq j \leq r\ ; \, 1 \leq s \leq n_j$.
\end{itemize}
\end{prop}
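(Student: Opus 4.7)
The plan is to reduce the statement to two separate assertions, one on each summand of the AST-decomposition, and to exploit results already proved in the paper.

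First I would observe that, by the preceding lemma and its corollary, the restriction $\tau_U := \tau|_{U_\varphi} \colon U_\varphi \to U_{\bar\varphi}$ is a $k$-linear isomorphism that conjugates $\varphi|_{U_\varphi}$ into $\bar\varphi|_{U_{\bar\varphi}}$, and similarly $\tau_W := \tau|_{W_\varphi} \colon W_\varphi \to W_{\bar\varphi}$ conjugates $\varphi|_{W_\varphi}$ into $\bar\varphi|_{W_{\bar\varphi}}$. Thus the problem splits into a nilpotent piece and a finite-dimensional isomorphism piece.

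For the first bullet, the endomorphism $\varphi|_{U_\varphi}$ is nilpotent of order $n$ on $U_\varphi$, and $\bar\varphi|_{U_{\bar\varphi}}$ is nilpotent of the same order on $U_{\bar\varphi}$, and these two nilpotent operators are conjugate via $\tau_U$. I would then apply Proposition \ref{prop:avila} directly to the pair $(\varphi|_{U_\varphi},\bar\varphi|_{U_{\bar\varphi}})$ (replacing the ambient space $V$ by $U_\varphi$ and the automorphism by $\tau_U$); its conclusion is precisely ${\mu}_i (U_\varphi,\varphi|_{U_\varphi}) = {\mu}_i (U_{\bar\varphi},{\bar\varphi}|_{U_{\bar\varphi}})$ for all $1\leq i \leq n$.

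For the second bullet, I would mimic the proof of Proposition \ref{prop:avila} using Lemma \ref{lem:nano} in place of Corollary \ref{cor:palace}. Concretely, for every irreducible factor $p_j(x)$ of the common annihilator polynomial, Lemma \ref{lem:nano} gives $\tau_W[\Ker p_j(\varphi|_{W_\varphi})^r] = \Ker p_j(\bar\varphi|_{W_{\bar\varphi}})^r$ and the analogous identity for $p_j(\varphi|_{W_\varphi})\Ker p_j(\varphi|_{W_\varphi})^{i+1}$. Hence $\tau_W$ induces a commutative diagram
\begin{equation*}
\xymatrix{
0 \ar[r] & \Ker p_j(\varphi|_{W_\varphi})^{s-1}{+}p_j(\varphi|_{W_\varphi})\Ker p_j(\varphi|_{W_\varphi})^{s+1} \ar[r] \ar[d]^{\sim}_{\tau_W} & \Ker p_j(\varphi|_{W_\varphi})^s \ar[d]^{\sim}_{\tau_W}\\
0 \ar[r] & \Ker p_j(\bar\varphi|_{W_{\bar\varphi}})^{s-1}{+}p_j(\bar\varphi|_{W_{\bar\varphi}})\Ker p_j(\bar\varphi|_{W_{\bar\varphi}})^{s+1} \ar[r] & \Ker p_j(\bar\varphi|_{W_{\bar\varphi}})^s
}
\end{equation*}
which yields an isomorphism of the two quotient spaces whose $K_j$-dimensions compute $\nu_s(W_\varphi,p_j(\varphi|_{W_\varphi}))$ and $\nu_s(W_{\bar\varphi},p_j(\bar\varphi|_{W_{\bar\varphi}}))$, respectively; since $\tau_W$ is $k$-linear and $K_j = k[x]/p_j(x)$ acts through the corresponding operator, the induced map is in fact $K_j$-linear, so the two invariants coincide.

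The only delicate point is the second bullet, where one must check that the isomorphism induced by $\tau_W$ on the quotients is not merely $k$-linear but $K_j$-linear, so that $K_j$-dimensions (and not just $k$-dimensions) are preserved; this follows routinely from the intertwining relation $\bar\varphi|_{W_{\bar\varphi}}\circ \tau_W = \tau_W\circ \varphi|_{W_\varphi}$, which commutes with applying any polynomial in the operator.
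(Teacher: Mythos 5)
Your proof is correct and follows essentially the same route as the paper: the paper likewise reduces the claim to the argument of Proposition \ref{prop:avila}, invoking Lemma \ref{lem:tata} for the nilpotent summand and Lemma \ref{lem:nano} for the finite-dimensional summand, and identifying each invariant as the dimension of the corresponding quotient space. Your explicit check that the induced isomorphism on the quotients is $K_j$-linear (so that $K_j$-dimensions, not merely $k$-dimensions, are preserved) is a point the paper leaves implicit, and is a worthwhile addition.
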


\begin{proof} Similarly to Proposition \ref{prop:avila}, the claim is a direct consequence of the statements of Lemma \ref{lem:tata} and Lemma \ref{lem:nano} because if $\psi$ is a finite potent endomorphism with $$a_{\psi_{\vert_{W_\psi}}}(x) = p_1(x)^{n_1}\cdot \dots \cdot p_r(x)^{n_r}\, ,$$\noindent then $$\{{\mu}_i (U_\psi,\psi_{\vert_{U_\psi}})\} = \# \big ( \Ker (\psi_{\vert_{U_\psi}})^i  \big / [\Ker (\psi_{\vert_{U_\psi}})^{i-1} + \psi(\Ker (\psi_{\vert_{U_\psi}})^{i+1})]\big )$$\noindent and $\{\nu_s (W_\psi,p_j(\psi_{\vert_{W_\psi}}))\} =$ $$ \dim_{K_j} ( \Ker p_j(\psi_{\vert_{W_\psi}})^s \big / [\Ker p_j(\psi_{\vert_{W_\psi}})^{s-1} + p_j(\psi_{\vert_{W_\psi}}) \Ker p_j(\psi_{\vert_{W_\psi}})^{s+1}] \big )\, ,$$\noindent where $K_j = k[x]/p_j(x)$.
\end{proof}

\begin{thm}[Classification Theorem] \label{th:class-fp} Let $\varphi, \phi \in X^{fp}_V$ be two finite potent endomorphisms of an arbitrary $k$-vector space $V$. Thus, $\varphi \sim \phi$ (mod. $\aut_k(V)$) if and only if
\begin{itemize}
\item $\{{\mu}_i (U_\varphi,\varphi_{\vert_{U_\varphi}})\} = \{{\mu}_i (U_{\phi},{\phi}_{\vert_{U_{\phi}}})\}$ for all ${1\leq i \leq n}$;
\item $\{\nu_s (W_\varphi,p_j(\varphi_{\vert_{W_\varphi}}))\} = \{\nu_s (W_{\phi},p_j({\phi}_{\vert_{W_{\phi}}}))\}$ for all $1\leq j \leq r\ ; \, 1 \leq s \leq n_j$.
\end{itemize}
\end{thm}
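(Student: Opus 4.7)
The forward direction ($\Longrightarrow$) is immediate from Proposition \ref{prop:avila-2}. For the converse, my plan is to construct the conjugating automorphism blockwise with respect to the AST-decompositions $V = U_\varphi \oplus W_\varphi = U_\phi \oplus W_\phi$. A preliminary observation is that the equality of invariants forces $\dim U_\varphi = \dim U_\phi$ (since a Jordan basis produced by Proposition \ref{lem:barrosa} realizes $\dim U_\varphi$ as the cardinal sum $\sum_i i\cdot \mu_i(U_\varphi,\varphi_{\vert_{U_\varphi}})$) and $\dim W_\varphi = \dim W_\phi$ (from the classical dimension count recalled in Subsection \ref{ss:clas-finite}), so the two decompositions have matching summand dimensions.

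On the nilpotent block I would mimic the converse argument in the proof of Theorem \ref{th:class-nilpot}: fix Jordan bases $\{v_{s_i}\}$ for $\varphi_{\vert_{U_\varphi}}$ and $\{w_{s_i}\}$ for $\phi_{\vert_{U_\phi}}$ via Proposition \ref{lem:barrosa}; since the invariants $\mu_i$ agree, the indexing sets $S_{{\mu}_i(U_\varphi,\varphi_{\vert_{U_\varphi}})}$ and $S_{{\mu}_i(U_\phi,\phi_{\vert_{U_\phi}})}$ may be identified, and setting $\tau_1\bigl((\varphi_{\vert_{U_\varphi}})^r v_{s_i}\bigr) := (\phi_{\vert_{U_\phi}})^r w_{s_i}$ for $0 \leq r < i$ defines a $k$-linear isomorphism $\tau_1 : U_\varphi \longrightarrow U_\phi$ intertwining the two nilpotent operators. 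On the isomorphic block, the common annihilator polynomial $a_{\varphi_{\vert_{W_\varphi}}}(x) = a_{\phi_{\vert_{W_\phi}}}(x)$ together with the matching invariant factors $\nu_s$ allow the classical finite-dimensional construction of Subsection \ref{ss:clas-finite} to produce an isomorphism $\tau_2 : W_\varphi \longrightarrow W_\phi$ with $\tau_2\,(\varphi_{\vert_{W_\varphi}})\,\tau_2^{-1} = \phi_{\vert_{W_\phi}}$.

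Finally I glue: define $\tau(u+w) := \tau_1(u) + \tau_2(w)$ for $u\in U_\varphi$ and $w\in W_\varphi$. Because both AST-decompositions express $V$ as an internal direct sum and $\tau_1,\tau_2$ are bijections, $\tau \in \aut_k(V)$. The identity $\tau\varphi\tau^{-1} = \phi$ then follows from a one-line verification on each summand, using that $\varphi$ is block-diagonal relative to $U_\varphi\oplus W_\varphi$ and that $\phi$ is block-diagonal relative to $U_\phi\oplus W_\phi$. The main technical point is ensuring, in the possibly infinite-dimensional nilpotent block, that the chosen Jordan chains actually span $U_\varphi$ (and $U_\phi$) so that $\tau_1$ is a genuine bijection rather than merely a partial map; this is precisely what Proposition \ref{lem:barrosa}(2) provides, so no additional argument is required beyond what has already been established.
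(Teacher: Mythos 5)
Your proposal is correct and follows essentially the same route as the paper: both directions are handled identically, with the converse built by choosing Jordan bases on the nilpotent block and on the finite-dimensional block, defining $\tau_1$ and $\tau_2$ by matching chain generators, and gluing along the AST-decompositions. The extra remarks you add (the dimension count and the spanning of the Jordan chains via Proposition \ref{lem:barrosa}(2)) are harmless elaborations of what the paper leaves implicit.
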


\begin{proof} $\Longrightarrow )$ If $[\varphi] = [\phi] \in X^{fp}_V\big /{\aut_k(V)}$, there exists $\tau \in \aut_k (V)$ such that $\phi = \tau \varphi \tau^{-1}$, and hence $\varphi_{\vert_{U_\varphi}}$ and $\phi_{\vert_{U_\phi}}$ have the same order of nilpotency, and $a_{\varphi_{\vert_{W_\varphi}}}(x) = a_{\phi_{\vert_{W_\phi}}}(x)$.

Accordingly, it follows from Proposition \ref{prop:avila-2} that
\begin{itemize}
\item $\{{\mu}_i (U_\varphi,\varphi_{\vert_{U_\varphi}})\} = \{{\mu}_i (U_{\phi},{\phi}_{\vert_{U_{\phi}}})\}$ for all ${1\leq i \leq n}$;
\item $\{\nu_s (W_\varphi,p_j(\varphi_{\vert_{W_\varphi}}))\} = \{\nu_s (W_{\phi},p_j({\phi}_{\vert_{W_{\phi}}}))\}$ for all $1\leq j \leq r\ ; \, 1 \leq s \leq n_j$.
\end{itemize}
$\Longleftarrow )$ Let us now assume that the invariant factors of $\varphi$ and $\phi$ referred to in the statement are equal. Let us consider families of vectors: ${\bigcup}_{1\leq i \leq n} \{u_{s_i}\}_{s_i\in S_{{\mu}_i (U_\varphi,\varphi_{\vert_{U_\varphi}})}}$ determining a Jordan basis of $U_\varphi$ for $\varphi_{\vert_{U_\varphi}}$; ${\bigcup}_{1\leq i \leq n} \{v_{s_i}\}_{s_i\in S_{{\mu}_i (U_\phi,\phi_{\vert_{U_\phi}})}}$ determining a Jordan basis of $U_\phi$ for $\phi_{\vert_{U_\phi}}$; ${\bigcup}_{1 \leq j \leq r  \, ; \, 1 \leq s \leq n_j} \{w^{sj}_h\}_{1\leq h \leq \nu_s (W_{\varphi},p_j({\varphi}_{\vert_{W_{\varphi}}}))}$ genera\-ting a Jordan basis of $W_{\varphi}$ for ${\varphi}_{\vert_{W_{\varphi}}}$; and ${\bigcup}_{1 \leq j \leq r  \, ; \, 1 \leq s \leq n_j} \{e^{sj}_h\}_{1\leq h \leq \nu_s (W_{\phi},p_j({\phi}_{\vert_{W_{\phi}}}))}$ genera\-ting a Jordan basis of $W_{\phi}$ for ${\phi}_{\vert_{W_{\phi}}}$ -see (\ref{eq:bases-j-finite}) and (\ref{eq:uds})-.

Thus, we can construct isomorphisms $\tau_1\colon U_\varphi \longrightarrow U_\phi$ and $\tau_2\colon W_\varphi \longrightarrow W_\phi$ from the following assignations: $$\begin{aligned}\tau_1 [\varphi^a (u_{s_i})] &= \phi^a (v_{s_i}) \\ \tau_2 [p_j^b(\varphi) [(\varphi^c (w^{sj}_h)]] &= p_j(\phi)^b (\phi^c (e^{sj}_h)]\end{aligned}$$\noindent for all $1\leq i \leq n$; $s_i\in S_{{\mu}_i (U_\varphi,\varphi_{\vert_{U_\varphi}})}$; $0\leq a \leq i-1$; $1 \leq j \leq r$; $1 \leq s \leq n_j$; $1\leq h \leq \nu_s (W_{\phi},p_j({\phi}_{\vert_{W_{\phi}}}))$; $0\leq b \leq s-1$ and $0\leq c \leq d_j-1$.

Moreover, since $V = U_\varphi \oplus W_\varphi$ and $V = U_\phi \oplus W_\phi$, there exists a unique automorphism $\tau\in \aut_k(V)$ such that $\tau_{\vert_{U_{\varphi}}} = \tau_1$ and $\tau_{\vert_{W_{\varphi}}} = \tau_2$. Thus, an easy check shows that $\phi = \tau \varphi \tau^{-1}$, and, therefore, $[\varphi] = [\phi] \in X^{fp}_V\big /{\aut_k(V)}$.
\end{proof}

\begin{rem} If $V$ and ${\tilde V}$ are two $k$-vector spaces, and $\psi \colon V\longrightarrow {\tilde V}$ is an isomorphism, it follows from the above statements that the induced map $$\begin{aligned} \psi_*\colon X^{fp}_{V}\big /{\aut_k({V})} &\longrightarrow X^{fp}_{\tilde V}\big /{\aut_k({\tilde V})} \\ [{\varphi}] &\longmapsto [\psi {\varphi} \psi^{-1}]  \end{aligned}$$\noindent is bijective.
\end{rem}

\begin{exam} \label{exam:fp} Let $V$ be a $k$-vector space of countable dimension. For each finite potent endomorphism $\varphi \in X^{fp}_V$, since $W_{\varphi}$ is a finite-dimensional vector space, one has that $V$ and $U_\varphi$ are isomorphic as $k$-vector spaces.

For each $n\in {\mathbb N}$, let $E_n$ be a $k$-vector space with $\dim_k (E_n) = n$. If we consider the quotient sets $X_n = \ed_k(E_n)\big / \aut_k(E_n)$, Theorem \ref{th:class-fp} shows that $$X^{fp}_V\big /{\aut_k(V)} = X^{N}_V\big /{\aut_k(V)} \times \big [ \bigcup_{n\in {\mathbb N}} X_n \big ]\, ,$$\noindent $X^{N}_V\big /{\aut_k(V)}$ being the set characterized in Example \ref{ex:countable}.
\end{exam}

\subsubsection{Trace and Determinant of Finite Potent Endomorphisms} Given an arbitrary $k$-vector space $V$, for each finite potent endomorphism $\varphi$ on $V$, and from the AST-decomposition $V = U_{\varphi} \oplus W_{\varphi}$, it is possible to define a trace $\tr_V(\varphi)$ and a determinant $\Det^k_V(1 +\varphi)$ (Subsection \ref{ss:CT}).

    If $\tau \in \aut_k(V)$, it is known that:
$$ \Det^k_V(1 + \tau \varphi \tau^{-1}) = \Det^k_V(1 + \varphi) \qquad (\text{\cite{HP}}, \text{ Lemma }3.13)\, .$$

Thus, considering the map $$\begin{aligned} \delta \colon X^{fp}_V &\longrightarrow k \\ \varphi &\longmapsto \Det^k_V(1 + \varphi)\end{aligned}$$\noindent the commutative diagram

$$\xymatrix{
X^{fp}_V \ar[rr]^{\delta} \ar[d]_{\pi} & & k \\
X^{fp}_V\big /{\aut_k(V)} \ar[urr]_{\tilde \delta}& &
}$$\noindent makes sense with ${\tilde \delta} ([\varphi]) = \Det^k_V(1 + \varphi)$, $\pi$ being the quotient map.

On the other hand, if $c_{\varphi_{\vert_{W_\varphi}}} (x)$ is the characteristic polynomial of the endomorphism $\varphi_{\vert_{W_\varphi}} \in \ed_k (W_\varphi)$, it is clear that \begin{equation} \label{eq:carac} c_{\varphi_{\vert_{W_\varphi}}} (x) = c_{\phi_{\vert_{W_\phi}}} (x)\, ,\end{equation} for all endomorphisms $\varphi \sim \phi$ (mod. $\aut_k(V)$).

    Since $c_{\varphi_{\vert_{W_\varphi}}} (x) = x^m - \tr_V(\varphi)x^{m-1} + \dots$, with $m = \dim (W_\varphi)$, from (\ref{eq:carac}) we deduce that $$\tr_V(\varphi) = \tr_V (\phi)\, ,$$\noindent for all $\varphi, \phi \in X^{fp}_V$ such that $[\varphi] = [\phi] \in X^{fp}_V\big /{\aut_k(V)}$.

Then, writing $$\begin{aligned} \gamma \colon X^{fp}_V &\longrightarrow k \\ \varphi &\longmapsto \tr_V(\varphi)\, ,\end{aligned}$$\noindent one has the following commutative diagram

$$\xymatrix{
X^{fp}_V \ar[rr]^{\gamma} \ar[d]_{\pi} & & k \\
X^{fp}_V\big /{\aut_k(V)} \ar[urr]_{\tilde \gamma}& &
}$$\noindent with ${\tilde \gamma} ([\varphi]) = \tr_V(\varphi)$.

\begin{rem}[Final Consideration] We have shown that the determinant and the trace of a finite potent operator are invariant under the classification offered. It should be note that this invariance also holds for all the coefficients of the \linebreak characteristic polynomial $c_{\varphi_{\vert_{W_\varphi}}} (x)$. Thus, it makes sense to define objects from these coefficients, and to explore their properties, to attempt to obtain new intrinsic reciprocity laws using similar arguments to the proofs of the Residue Theorem -\cite{Ta}- or the Reciprocity Law for the Segal-Wilson pairing -\cite{HP}-.
\end{rem}

\end{document}